\newtheorem{theorem}{Theorem}[section]
\newtheorem{proposition}[theorem]{Proposition}
\newtheorem{lemma}[theorem]{Lemma}
\newtheorem{claim}[theorem]{Claim}
\newtheorem{corollary}[theorem]{Corollary}
\theoremstyle{definition}
\newtheorem{remark}[theorem]{Remark}
\newtheorem{question}[theorem]{Question}
\title[Menger's property and $D$-spaces]{On the Menger covering property and $D$-spaces}
\author{Du\v{s}an Repov\v{s} and  Lyubomyr Zdomskyy}
\address{Faculty of Mathematics and Physics, and Faculty of Education,
University of Ljubljana, P. O. Box 2964, Ljubljana, Slovenia 1001.}
\email{dusan.repovs@guest.arnes.si}
\urladdr{http://www.fmf.uni-lj.si/\~{}repovs/index.htm}
\address{Kurt G\"odel Research Center for Mathematical Logic,
University of Vienna, W\"ahringer Stra\ss e 25, A-1090 Wien,
Austria.} \email{lzdomsky@gmail.com}
\urladdr{http://www.logic.univie.ac.at/\~{}lzdomsky/}
\subjclass[2000]{Primary: 54D20, 54A35; Secondary: 54H05, 03E17.}
\keywords{$D$-space, subparacompactness, Menger property,
(productively) Lindel\"of space, Michael space.}
\thanks{The first author was supported by SRA grants P1-0292-0101 and J1-2057-0101.
The second author   acknowledges the support of FWF grant
P19898-N18. We would also like to thank Leandro Aurichi, Franklin
Tall, and Hang Zhang for kindly making their recent papers available
to us.}
\begin{document}

\begin{abstract}
The main results of this note are:
\begin{itemize}
\item It is consistent that every subparacompact space $X$ of size
$\omega_1$ is a $D$-space.
\item If there exists a Michael space, then all productively Lindel\"of spaces
have the Menger property, and, therefore, are $D$-spaces.
\item Every locally $D$-space which admits a $\sigma$-locally finite
cover by Lindel\"of spaces  is a $D$-space.
\end{itemize}
\end{abstract}

\maketitle

\section{Introduction}

A \emph{neighbourhood assignment} for a topological space $X$ is a
function $N$ from $X$ to the topology of $X$ such that $x\in N(x)$
for all $x$. A topological space $X$ is said to be a
\emph{$D$-space} \cite{vDaPfe79}, if for every neighbourhood
assignment $N$ for $X$ there exists a closed and discrete subset
$A\subset X$ such that $N(A)=\bigcup_{x\in A}N(x)=X$.

It is unknown whether paracompact (even Lindel\"of) spaces are
$D$-spaces. Our first result in this note answers
\cite[Problem~3.8]{Gru??} in the affirmative and may be thought of
as a very partial solution to this problem\footnote{While completing
this manuscript we have learned that this result has been
independently obtained by Hang Zhang and Wei-Xue Shi, see
\cite{ShiZha??}.}.

Our second result shows that the affirmative answer to
\cite[Problem~2.6]{Tal??}, which  asks whether all productively
Lindel\"of spaces are $D$-spaces, is consistent. It is worth
mentioning that our premises  (i.e., the existence of  a Michael
space) are not known to be inconsistent.

Our third result is a common generalization of two theorems from
\cite{MarSou09}.

Most of our proofs use either the recent important result of Aurichi
\cite{Aur10} asserting that every topological space with the Menger
property  is a $D$-space, or the ideas from its proof. We consider
only regular topological spaces. For the definitions of small
cardinals $\mathfrak d$ and $\mathrm{cov}(\mathcal M)$ used in this
paper we refer the reader to \cite{Vau90}.

\section{Subparacompact spaces of size $\omega_1$}

Following  \cite{BukHal03} we say that a topological space $X$ has
the property $E^*_\omega$ if for every sequence $\langle
u_n:n\in\omega\rangle$ of \emph{countable} open covers of $X$ there
exists  a sequence $\langle v_n:n\in\omega\rangle$ such that $v_n\in
[u_n]^{<\omega}$ and $\bigcup_{n\in\omega}\cup v_n =X$. In the realm
of Lindel\"of spaces the property $E^*_\omega$  is   usually called
the \emph{Menger property} or $\bigcup_{\mathit{fin}}(\mathcal
O,\mathcal O)$, see \cite{Tsa07} and references therein.

We say that a topological space $X$ has  property $D_\omega$, if for
every neighbourhood assignment $N$ there exists a countable
collection $\{A_n:n\in\omega\}$ of closed discrete subsets of $X$
such that $X=\bigcup_{n\in\omega}N(A_n)$. Observe that the property
$D_\omega$ is inherited by all closed subsets.

The following theorem is the main result of this section.

\begin{theorem} \label{aur}
Suppose that a topological space $X$ has  properties $D_\omega$ and
$E^*_\omega$. Then $X$ is a $D$-space.
\end{theorem}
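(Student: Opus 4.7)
The plan is to adapt Aurichi's proof that Menger spaces are $D$-spaces, replacing his single use of the full Menger property by an iterated application of $D_\omega$: at each stage of the recursion, $D_\omega$ manufactures a countable open cover of $X$, so the weaker property $E^*_\omega$ (which only handles countable covers) can carry out the finite selection. Since $D_\omega$ is closed-hereditary, as observed just before the statement of the theorem, it can be applied to arbitrary closed subspaces of $X$.

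Concretely, given a neighbourhood assignment $N$, I would construct by recursion on $s \in \omega^{<\omega}$ a family of closed sets $F_s \subseteq X$ together with a $D_\omega$-witness $\{A^s_n : n \in \omega\}$ for $F_s$ with the restricted assignment: each $A^s_n \subseteq F_s$ is closed discrete in $X$, and $F_s \subseteq \bigcup_n N(A^s_n)$. Set $F_\emptyset = X$ and $F_{s \frown n} = F_s \setminus \bigcup_{k \le n} N(A^s_k)$, which is closed in $X$. A telescoping along any branch yields $X \setminus F_s \subseteq \bigcup_{j<|s|,\, m \le s(j)} N(A^{s \restriction j}_m)$, and therefore the collection
\[
\mathcal{W}_s \;=\; \{ N(A^s_n) : n \in \omega \} \cup \{ N(A^{s \restriction j}_m) : j < |s|,\ m \le s(j) \}
\]
is a countable open cover of $X$ for every $s$. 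Enumerating $\omega^{<\omega} = \{s_i : i \in \omega\}$, I would then apply $E^*_\omega$ to the sequence $\langle \mathcal{W}_{s_i} : i \in \omega \rangle$ to obtain finite $\mathcal{V}_i \subseteq \mathcal{W}_{s_i}$ with $\bigcup_i \bigcup \mathcal{V}_i = X$. Let $A$ be the union of all the closed discrete sets $A^t_m$ whose $N$-image $N(A^t_m)$ appears in some $\mathcal{V}_i$. Then $N(A) \supseteq \bigcup_i \bigcup \mathcal{V}_i = X$ by construction, and $A$ is a countable union of closed discrete subsets of $X$.

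The main obstacle is upgrading this $\sigma$-closed-discreteness of $A$ to genuine closed-discreteness. For this I would exploit the nesting of the $F_s$'s: for any $a \in A^{t_0}_{m_0} \subseteq A$, any other selected $A^{t'}_{m'} \subseteq F_{t'}$ can intersect $N(a)$ only when $F_{t'} \cap N(a) \ne \emptyset$, and the recursive removal $F_{s \frown n} = F_s \setminus \bigcup_{k \le n} N(A^s_k)$ should severely restrict the relevant $t'$. To actually secure local finiteness, one likely needs to refine the recursion --- for instance, using regularity of $X$ to shrink each $N(b)$ with $b \in A^s_n$ so as to be disjoint from the closed discrete sets already chosen at earlier stages --- so that the resulting $A$ is closed and discrete. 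Organising this bookkeeping consistently across the whole tree is where I expect the bulk of the work to lie.
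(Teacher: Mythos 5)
Your tree of closed sets $F_s$ and $D_\omega$-witnesses $A^s_n$ is exactly the paper's construction, and the covering part of your argument ($N(A)=X$) is fine. But the step you flag as "the main obstacle" is a genuine gap, not a bookkeeping issue, and the way you invoke $E^*_\omega$ makes it unfixable as stated. By enumerating $\omega^{<\omega}$ and applying $E^*_\omega$ statically to the list $\langle\mathcal W_{s_i}\rangle$, you allow the selection to land on nodes $t,t'$ that are incomparable in the tree. The only disjointness your recursion provides is along branches: $A^{t'}_{m'}\subseteq F_{t'}$ is disjoint from $N(A^{t'\restriction j}_k)$ for $j<|t'|$, $k\le t'(j)$, and nothing whatsoever relates $A^{t}_{m}$ and $A^{t'}_{m'}$ for incomparable $t,t'$. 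So the selected family need not be locally finite, and its union need not be closed or discrete. Your proposed patch (shrinking the $N(b)$ by regularity to avoid previously chosen sets) changes the neighbourhood assignment, and in any case "previously chosen" in an arbitrary enumeration of the tree has no relation to the branch order that actually produces disjointness; moreover even pairwise separated closed discrete sets can have a non-closed, non-discrete union without local finiteness.

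The missing idea is the game-theoretic characterization of $E^*_\omega$ (Proposition~\ref{meng_game}): a space has $E^*_\omega$ if and only if player $I$ has no winning strategy in the $E^*_\omega$-game. The paper packages your tree as a \emph{strategy} for player $I$ (the $n$th cover played depends on player $II$'s previous answers, i.e., on the node $s$ reached so far, with $u_s=\{(X\setminus F_s)\cup N(A_{s,k}):k\in\omega\}$), and the failure of this strategy to be winning produces a single branch $z\in\omega^\omega$ with $X=\bigcup_n N(A_{z\restriction n,z(n)})$. Along that one branch you get $A_{z\restriction n,z(n)}\subseteq F_{z\restriction n}=X\setminus\bigcup_{i<n}N(A_{z\restriction i,z(i)})$, so every point lies in some open $N(A_{z\restriction i,z(i)})$ that misses all later members of the branch; this is precisely the local finiteness that makes $A=\bigcup_n A_{z\restriction n,z(n)}$ closed and discrete. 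Without passing through the game equivalence (which is a nontrivial theorem, the Hurewicz--Scheepers argument cited as \cite[Theorem~13]{COC1}), the selection principle applied to a pre-built tree of covers does not give you a branch, and your argument cannot be completed.
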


The proof of Theorem~\ref{aur} is analogous to the proof of
\cite[Proposition~2.6]{Aur10}. In particular, it uses the following
 game  of length $\omega$ on a topological space $X$:
 On the $n$th  move player $I$ chooses a countable open cover
$u_n=\{U_{n,k}:k\in\omega\}$ such that $U_{n,k}\subset U_{n,k+1}$
for all $k\in\omega$, and player $II$ responds by choosing a natural
number $k_n$. Player $II$ wins the game if $\bigcup_{n\in\omega}
U_{n,k_n}=X$. Otherwise, player $I$ wins.  We shall call this game
an $E^*_\omega$-game. In the realm of Lindel\"of spaces this game is
known under the name \emph{Menger game}. It is well-known that a
Lindel\"of space $X$ has the  property $E^*_\omega$ if and only if
the first player has no winning strategy in the $E^*_\omega$-game on
$X$, see \cite{Hur25, COC1}. The proof of \cite[Theorem~13]{COC1}
also works  without any change for non-Lindel\"of spaces.

\begin{proposition} \label{meng_game}
A topological space $X$ has the property $E^*_\omega$ if and only if
the first player has no winning strategy in the  $E^*_\omega$-game.
\end{proposition}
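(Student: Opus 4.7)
I would prove the two implications separately.

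For the easy direction ($\Leftarrow$): given any sequence $\langle u_n : n \in \omega\rangle$ of countable open covers of $X$, I design a history-independent strategy $\sigma_0$ for player I that, at round $n$, plays the increasing enumeration $\{\bigcup_{j\leq k} U_{n,j} : k\in\omega\}$ of $u_n = \{U_{n,j} : j\in\omega\}$. By the hypothesis that player I has no winning strategy, $\sigma_0$ is defeated by some play $\langle k_n\rangle$ of player II, so $\bigcup_n \bigcup_{j\leq k_n} U_{n,j}=X$. The finite subfamilies $v_n=\{U_{n,j}:j\leq k_n\}\subseteq u_n$ then witness $E^*_\omega$ for $\langle u_n\rangle$.

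For the hard direction ($\Rightarrow$): assuming $X$ has $E^*_\omega$, I want to show that any strategy $\sigma$ for player I fails. Writing $\sigma(\tau)=\{U^\tau_m:m\in\omega\}$ with $U^\tau_m\subseteq U^\tau_{m+1}$ for each $\tau\in\omega^{<\omega}$, I plan to enumerate $\omega^{<\omega}$ as $\langle\tau_n:n\in\omega\rangle$ and apply $E^*_\omega$ to the countable sequence $\langle\sigma(\tau_n):n\in\omega\rangle$; since each such cover is a chain, any finite subfamily reduces to a single maximum set, giving indices $m_n$ with $X=\bigcup_n U^{\tau_n}_{m_n}$. It then remains to extract from this tree-spread selection a single branch $\alpha\in\omega^\omega$ with $X=\bigcup_n U^{\alpha\upharpoonright n}_{\alpha(n)}$, which is exactly a play of player II defeating $\sigma$.

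I expect this final amalgamation to be the main obstacle: the sets $U^{\tau_n}_{m_n}$ are spread throughout the tree $\omega^{<\omega}$, whereas a single play visits only one node per level. The standard Hurewicz--Scheepers resolution, applied verbatim in \cite[Theorem~13]{COC1}, is to interleave the choice of enumeration $\langle\tau_n\rangle$ with a stage-by-stage inductive construction of $\alpha$, or equivalently to apply $E^*_\omega$ to a sequence of covers in which the $n$-th element already aggregates player I's responses along every partial history of length $n$, and then to carve out a compatible branch through these selections by a tree argument that exploits the monotonicity of the $U^\tau_m$. The crucial point is that this combinatorial bookkeeping never invokes Lindel\"ofness, only the selection principle $E^*_\omega$ itself, so the classical argument transfers to the present non-Lindel\"of setting without any modification.
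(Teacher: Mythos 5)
Your proposal is correct and takes essentially the same route as the paper, which gives no independent argument but simply observes that the classical Hurewicz--Scheepers proof of \cite[Theorem~13]{COC1} never invokes Lindel\"ofness and so transfers verbatim to countable covers of arbitrary spaces --- exactly the reduction you make, including your (accurate) warning that the naive one-shot application of $E^*_\omega$ to an enumeration of $\omega^{<\omega}$ must be replaced by the interleaved, level-by-level construction of that classical argument. Your explicit verification of the easy direction via a history-independent strategy playing the increasing enumerations is the standard argument and is fine.
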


A strategy of the first player in the $E^*_\omega$-game may be
thought of as a map $\Upsilon:\omega^{<\omega}\to\mathcal O(X)$,
where $\mathcal O(X)$ stands for the collection of all countable
open covers of $X$. The strategy $\Upsilon$ is winning, if
$X\neq\bigcup_{n\in\omega}U_{z\upharpoonright n, z(n)}$ for all
$z\in\omega^\omega$, where
$\Upsilon(s)=\{U_{s,k}:k\in\omega\}\in\mathcal O(X)$.

We are in a position now to present the proof of Theorem~\ref{aur}.
\begin{proof}
 We shall define a strategy $\Upsilon:X\to\mathcal
O(X)$ of the player $I$ in the $E^*_\omega$-game on $X$ as follows.
Set $F_\emptyset=X$.
 The property $D_\omega$ yields an increasing sequence $\langle
A_{\emptyset,k}:k\in\omega\rangle$ of closed discrete subsets of
$F_\emptyset$ such that $X=\bigcup_{k\in\omega}N(A_{\emptyset,k})$.
Set
$\Upsilon(\emptyset)=u_\emptyset=\{N(A_{\emptyset,k}):k\in\omega\}$.

Suppose that for some $m\in\omega$ and all $s\in\omega^{\leq m}$ we
have already defined a closed subset $F_s$ of $X$, an increasing
sequence $\langle A_{s,k}:k\in\omega\rangle $ of closed discrete
subsets of $F_s$, and a countable open cover $\Upsilon(s)=u_s$ of
$X$ such that $u_s=\{(X\setminus F_s)\cup N(A_{s,k}):k\in\omega\}$.

Fix  $s\in\omega^{m+1}$. Since $X$ has the property $D_\omega$, so
does its closed subspace $F_{s}:=X\setminus\bigcup_{i<m+1}
N(A_{s\upharpoonright i,s(i)})$, and hence there exists an
increasing sequence $\langle A_{s,k}:k\in\omega\}$ of closed
discrete subsets of $F_{s}$ such that
$F_{s}\subset\bigcup_{k\in\omega}N(A_{s,k})$. Set
$\Upsilon(s)=u_{s}=\{(X\setminus F_s)\cup N(A_{s,k}):k\in\omega\}$.
This completes the definition of $\Upsilon$.

Since $X$ has the property $E^*_\omega$, $\Upsilon$ is not winning.
Thus there exists $z\in\omega^\omega$ such that $X
=\bigcup_{n\in\omega} (X\setminus F_{z\upharpoonright n})\cup
N(A_{z\upharpoonright n,z(n)}).$ By the inductive construction,
$X\setminus F_{\emptyset}=\emptyset$ and $X\setminus
F_{z\upharpoonright n}=\bigcup_{i<n} N(A_{z\upharpoonright i,z(i)})$
for all $n>0$. It follows from above  that
 $X =\bigcup_{n\in\omega} N(A_{z\upharpoonright n,z(n)}).$
In addition, $ A_{z\upharpoonright n,z(n)}\subset
F_{z\upharpoonright n}=X\setminus \bigcup_{i<n}
N(A_{z\upharpoonright i,z(i)})$ for all $n>0$, which  implies that
$A:=\bigcup_{n\in\omega}A_{z\upharpoonright n,z(n)}$ is a closed
discrete subset of $X$. It suffices to note that  $N(A)=X$. \hfill
\end{proof}

We recall from \cite{Bur84} that a topological space $X$ is called
\emph{subparacompact}, if every open cover of $X$ has a
$\sigma$-locally finite closed refinement.

\begin{lemma} \label{l_subpara}
Suppose that $X$ is a  subparacompact topological space which can be
covered by $\omega_1$-many  of its Lindel\"of subspaces. Then $X$
has the property $D_\omega$.\footnote{By the methods of
\cite{ShiZha??} the submetalindel\"ofness is sufficient here.}

In particular, every subparacompact space of size $\omega_1$ has the
property $D_\omega$.
\end{lemma}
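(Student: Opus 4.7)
The plan is a two-stage reduction: Lindel\"ofness shrinks the cover $\{N(x):x\in X\}$ to one of size at most $\omega_1$, and subparacompactness then yields a $\sigma$-discrete closed refinement, which we combine with a Lindel\"of-based indexing to extract countably many closed discrete sets.

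Given a neighbourhood assignment $N$, for each $\alpha<\omega_1$ the Lindel\"ofness of $L_\alpha$ applied to the cover $\{N(x):x\in L_\alpha\}$ of $L_\alpha$ yields a countable $C_\alpha\subseteq L_\alpha$ with $L_\alpha\subseteq N(C_\alpha)$; the union $C=\bigcup_{\alpha<\omega_1}C_\alpha$ then satisfies $|C|\le\omega_1$ and $N(C)=X$. Apply subparacompactness of $X$ to the open cover $\{N(c):c\in C\}$ and, using regularity to pass from $\sigma$-locally finite to $\sigma$-discrete, obtain a closed refinement $\mathcal{F}=\bigcup_{k\in\omega}\mathcal{F}_k$. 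For each $F\in\mathcal{F}$ fix $c(F)\in C$ with $F\subseteq N(c(F))$.

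The decomposition into countably many closed discrete sets rests on the following observation: since $\mathcal{F}_k$ is locally finite in $X$ and each $L_\alpha$ is Lindel\"of, the family $\{F\in\mathcal{F}_k:F\cap L_\alpha\ne\emptyset\}$ is countable. Put $\alpha(F)=\min\{\alpha:F\cap L_\alpha\ne\emptyset\}$ and enumerate, for each $\alpha<\omega_1$, the $F$'s with $\alpha(F)=\alpha$ as $\{F^{k,\alpha}_n:n<\omega\}$. For each pair $(k,n)\in\omega^2$, choose a representative $y_F\in F$ for every $F=F^{k,\alpha}_n$ and set
\[
A_{k,n}=\{y_F:\alpha<\omega_1\}.
\]
The discreteness of $\mathcal{F}_k$---pairwise disjointness together with every point of $X$ having a neighbourhood meeting at most one member of $\mathcal{F}_k$---combined with $y_F\in F$ makes $A_{k,n}$ closed discrete in $X$: any point not already in $A_{k,n}$ either lies off $\bigcup\mathcal{F}_k$ or inside a unique $F\in\mathcal{F}_k$, in both cases admitting a neighbourhood disjoint from $A_{k,n}$.

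The hard part will be the covering condition $\bigcup_{k,n}N(A_{k,n})=X$, which demands $F\subseteq N(y_F)$ for the chosen $y_F\in F$. Since the witness $c(F)\in C$ with $F\subseteq N(c(F))$ need not lie in $F$, one cannot simply take $y_F=c(F)$. The technical core of the proof is to arrange the refinement so that each $F$ contains a $C$-centre: one may shrink $\{N(c):c\in C\}$ via regularity to a cover $\{V_c\}$ with $c\in V_c\subseteq\overline{V_c}\subseteq N(c)$, apply subparacompactness to $\{V_c:c\in C\}$, and use closures to recover a $\sigma$-discrete closed refinement whose members are centred at points of $C$. Then $y_F\in C\cap F$ can be selected with $F\subseteq N(y_F)$, and the $A_{k,n}$'s witness $D_\omega$.
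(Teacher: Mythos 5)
Your first stage (producing $C$ with $|C|\le\omega_1$ and $N(C)=X$) and your closed-discreteness argument for the $A_{k,n}$ are fine, but the step you yourself flag as the technical core --- arranging that each member $F$ of the refinement contains a point $y_F\in C$ with $F\subseteq N(y_F)$ --- is a genuine gap, and the sketch you give does not close it. Subparacompactness only produces a $\sigma$-locally finite family of closed sets each \emph{contained in} some $V_{c}\subseteq N(c)$; nothing forces a member to contain the ``centre'' $c$, and a member may well be a small closed set sitting near the boundary of $V_c$, far from $c$. The natural repairs fail: adjoining $c(F)$ to each $F$ destroys local finiteness as soon as infinitely many members of some $\mathcal{F}_k$ refine into the same $V_c$ (every neighbourhood of $c$ would then meet all of them), while replacing the refinement by $\{\overline{V_c}:c\in C\}$ gives centred closed sets but an $\omega_1$-sized family with no reason to be $\sigma$-locally finite. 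So the covering condition $\bigcup_{k,n}N(A_{k,n})=X$ is not established.

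The paper's proof sidesteps the need for centres entirely by putting \emph{all} of $C\cap F$, not one representative, into the closed discrete sets; then $\bigcup_{n,m}A_{n,m}=C$ and the covering is automatic from $N(C)=X$. The price is that one must know $C\cap F$ is countable for each refinement member $F$, and this is exactly what the paper's transfinite construction of $C$ buys: the sets $C_\alpha$ are chosen inductively so that $C_\alpha\cap N(\bigcup_{\xi<\alpha}C_\xi)=\emptyset$, whence for $c\in C_\beta$ the set $N(c)\cap C$ is contained in the countable set $\bigcup_{\xi\le\beta}C_\xi$. Since each $F$ lies in some $N(c)$ with $c\in C$, one can enumerate $C\cap F=\{x_{n,F,m}:m\in\omega\}$ and let $A_{n,m}$ collect the $m$-th points over $F\in\mathcal{F}_n$; local finiteness of $\mathcal{F}_n$ makes each $A_{n,m}$ closed discrete. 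Your version of $C$, built without this disjointness condition, need not have $N(c)\cap C$ countable, so this route is also closed to you as written. To repair your proof, replace your choice of the $C_\alpha$ by the paper's inductive one (cover only $L_\alpha\setminus N(\bigcup_{\xi<\alpha}C_\xi)$ at stage $\alpha$, choosing the new points outside $N(\bigcup_{\xi<\alpha}C_\xi)$) and then distribute all of $C$ rather than selecting single representatives.
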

\begin{proof}
Let $\mathcal L=\{L_\xi:\xi<\omega_1\}$ be an increasing cover of
$X$ by Lindel\"of subspaces,
  $\tau$ be the topology of $X$, and
 $N:X\to\tau$ be a neighbourhood assignment. Construct by induction a sequence
$\langle C_\alpha:\alpha<\omega_1\rangle$ of (possibly empty)
countable subsets of $X$ such that
\begin{itemize}
 \item[$(i)$] $L_0\subset N(C_0)$;
\item[$(ii)$] $C_\alpha\cap N(\bigcup_{\xi<\alpha}C_\xi)=\emptyset$ for all $\alpha<\omega_1$; and
\item[$(iii)$] $L_\alpha\setminus N(\bigcup_{\xi<\alpha}C_\xi)\subset N(C_\alpha)$ for all $\alpha<\omega_1$.
\end{itemize}

 Set
$C=\bigcup_{\alpha<\omega_1}C_\alpha$. The subparacompactness of $X$
yields a closed cover  $\mathcal
F=\bigcup_{n\in\omega}\mathcal{F}_n$ of $X$ which refines $\mathcal
 U=\{N(x):x\in C\}$ and such that each $\mathcal{F}_n$ is
locally-finite. Since every element of $\mathcal{U}$ contains at
most countably many elements of $C$, so do elements of $\mathcal F$.
Therefore for every $F\in\mathcal F_n$ such that $C\cap
F\neq\emptyset$ we can write this intersection in the form
$\{x_{n,F,m}:m\in\omega\}$. Now it is easy to see that
$A_{n,m}:=\{x_{n,F,m}:F\in\mathcal F_n,C\cap F\neq\emptyset\}$ is a
closed discrete subset of $X$ and $\bigcup_{n,m\in\omega}A_{n,m}=C$.
\end{proof}

\begin{remark} What we have actually used in the proof of
Lemma~\ref{l_subpara} is the following weakening of
subparacompactness: every open cover $\mathcal{U}$ which is closed
under unions of its countable subsets admits a $\sigma$-locally
finite closed refinement. We do not know whether this property is
strictly weaker than  subparacompactness.
\end{remark}

\begin{corollary} \label{cor_tightness}
Let $X$ be a countably tight subparacompact topological space of
density $\omega_1$. Then $X$ has the property $D_\omega$.
\end{corollary}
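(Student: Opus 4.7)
The plan is to deduce the corollary from Lemma~\ref{l_subpara} by exhibiting $X$ as a union of $\omega_1$-many Lindel\"of closed subspaces. Fix a dense set $D=\{d_\xi:\xi<\omega_1\}$ and, for each $\alpha<\omega_1$, set $L_\alpha=\overline{\{d_\xi:\xi<\alpha\}}$. The family $\langle L_\alpha:\alpha<\omega_1\rangle$ is increasing, each $L_\alpha$ is closed in $X$ (hence subparacompact), and each $L_\alpha$ is separable via the countable dense subset $\{d_\xi:\xi<\alpha\}$.

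The role of the countable tightness hypothesis is exactly to ensure that $\{L_\alpha:\alpha<\omega_1\}$ covers $X$. Indeed, for any $x\in X$, density gives $x\in\overline{D}$, and countable tightness then produces a countable $B\subseteq D$ with $x\in\overline{B}$; since $B$ is countable, $B\subseteq\{d_\xi:\xi<\alpha\}$ for some $\alpha<\omega_1$, which forces $x\in L_\alpha$. Hence $X=\bigcup_{\alpha<\omega_1}L_\alpha$.

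The main remaining step, which I expect to be the heart of the argument and the principal technical obstacle, is to verify that each $L_\alpha$ is Lindel\"of; once this is in hand, Lemma~\ref{l_subpara} applies verbatim and delivers the property $D_\omega$ for $X$. For this, one invokes the fact that a separable regular subparacompact space is Lindel\"of. Given an open cover of $L_\alpha$, Burke's characterization furnishes a $\sigma$-discrete closed refinement $\mathcal{F}=\bigcup_n\mathcal{F}_n$; each discrete family $\mathcal{F}_n$ of nonempty closed sets is pairwise disjoint, and by swelling $\mathcal{F}_n$ inside the subparacompact ambient space to a pairwise disjoint family of nonempty open sets, the countable chain condition provided by separability forces $\mathcal{F}_n$ to be countable. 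Selecting, for each $F\in\mathcal{F}$, an element of the cover containing $F$ then produces a countable subcover. With every $L_\alpha$ Lindel\"of, the hypotheses of Lemma~\ref{l_subpara} are met, completing the proof.
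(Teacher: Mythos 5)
Your overall strategy coincides with the paper's: the same sets $L_\alpha=\overline{\{d_\xi:\xi<\alpha\}}$, the same use of countable tightness to see that they cover $X$, and the same reduction to the claim that a separable subparacompact (regular) space is Lindel\"of, which the paper states without proof as ``it suffices to note that the closure of any countable subspace of a subparacompact space is Lindel\"of.'' The problem is in your justification of that claim. The swelling step is not available: in a regular (even Tychonoff) space a discrete family of \emph{closed} sets need not expand to a pairwise disjoint family of open sets --- that is essentially collectionwise Hausdorffness, which subparacompactness does not supply. Consequently separability (hence ccc) gives no bound on the size of the discrete closed families $\mathcal{F}_n$; ccc only controls pairwise disjoint \emph{open} families.

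Concretely, consider the Niemytzki (tangent disc) plane $L$. It is subparacompact: given any open cover, the singletons $\{(x,0)\}$ of the $x$-axis form a discrete family of closed sets each contained in a member of the cover, and the open upper half-plane carries its Euclidean subspace topology, hence is $\sigma$-compact and can be refined by countably many compact (so closed in $L$) sets; together these give a $\sigma$-discrete closed refinement. $L$ is separable and first countable, yet not Lindel\"of, since the $x$-axis is an uncountable closed discrete subspace. The uncountable discrete closed family $\{\{(x,0)\}:x\in\mathbb{R}\}$ cannot be expanded to a pairwise disjoint open family (an elementary computation with tangent disc radii shows this), which is exactly where your argument breaks. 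Worse, $L$ shows that the statement you are trying to prove --- and the one the paper itself invokes --- fails as written: the closure in $L$ of the countable set of rational points of the open upper half-plane is all of $L$, which is subparacompact but not Lindel\"of. So this step cannot be repaired without further hypotheses; note that $L$ still has property $D_\omega$ for trivial reasons, so the corollary is not thereby refuted, but neither your argument nor the paper's one-line justification establishes it.
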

\begin{proof}
Let $\{x_\alpha:\alpha<\omega_1\}$ be a dense subspace of $X$. Since
$X$ has countable tightness,
$X=\bigcup_{\alpha<\omega_1}\overline{\{x_\xi:\xi<\alpha\}}$. It
suffices to note that the closure of any countable subspace of a
subparacompact space is Lindel\"of.
\end{proof}

It is well-known  \cite[Theorem~4.4]{COC2}  (and it easily follows
from corresponding definitions) that any Lindel\"of space of size
$<\mathfrak d$ has the Menger property. The same argument shows that
every topological space of size $<\mathfrak d$  has the property
$E^*_\omega$. Combining this with  Theorem~\ref{aur} and
Lemma~\ref{l_subpara} we get the following corollary, which implies
the first of the results mentioned in our abstract.

\begin{corollary} \label{main_cor}
Suppose that $X$ is a  subparacompact topological space of size
$|X|<\mathfrak d$ which can be covered  by $\omega_1$-many  of its
Lindel\"of subspaces. Then $X$ is a $D$-space.
\end{corollary}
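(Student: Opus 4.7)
The plan is essentially to combine the three ingredients that the paragraph preceding the corollary has already assembled, namely Theorem~\ref{aur}, Lemma~\ref{l_subpara}, and the remark that any space of cardinality smaller than $\mathfrak d$ enjoys $E^*_\omega$. Since Theorem~\ref{aur} states that $D_\omega + E^*_\omega$ implies being a $D$-space, it suffices to verify each of the two covering-type properties separately for the given $X$.

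First I would invoke Lemma~\ref{l_subpara} directly: the hypotheses of that lemma are exactly the subparacompactness of $X$ together with the existence of an $\omega_1$-sized cover by Lindel\"of subspaces, both of which are given. This yields property $D_\omega$ for $X$ with no extra work.

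Next I would handle $E^*_\omega$. The authors have just recalled that \cite[Theorem~4.4]{COC2} implies every Lindel\"of space of size $<\mathfrak d$ has the Menger property, and that the same argument shows every topological space of size $<\mathfrak d$ has $E^*_\omega$. Since $|X|<\mathfrak d$ by hypothesis, property $E^*_\omega$ for $X$ follows. (For a sequence $\langle u_n:n\in\omega\rangle$ of countable open covers, for each $x\in X$ let $f_x\in\omega^\omega$ pick, for every $n$, an index of some member of $u_n$ containing $x$; since $|X|<\mathfrak d$ the family $\{f_x:x\in X\}$ is dominated by a single $g\in\omega^\omega$, and the finite subfamilies of $u_n$ chosen according to $g$ cover $X$.)

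Having established both $D_\omega$ and $E^*_\omega$ for $X$, a single application of Theorem~\ref{aur} concludes that $X$ is a $D$-space. There is no genuine obstacle here; the corollary is a bookkeeping statement whose entire content lies in noticing that the three previous results plug together. The only point worth care is to make sure that the remark about $E^*_\omega$ under $|X|<\mathfrak d$ really requires no Lindel\"ofness assumption on $X$, which is immediate from the dominating-family argument sketched above.
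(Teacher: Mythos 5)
Your proposal is correct and follows exactly the route the paper takes: Lemma~\ref{l_subpara} gives $D_\omega$, the cardinality hypothesis $|X|<\mathfrak d$ gives $E^*_\omega$, and Theorem~\ref{aur} combines the two. One detail in your parenthetical justification of the $E^*_\omega$ step is stated incorrectly, though: a family $\{f_x:x\in X\}$ of size $<\mathfrak d$ need \emph{not} be dominated by a single $g$ (that conclusion requires size $<\mathfrak b$); what $|X|<\mathfrak d$ gives is that the family is not \emph{dominating}, so there exists $g\in\omega^\omega$ such that for every $x$ one has $g(n)>f_x(n)$ for infinitely many $n$. After replacing each $u_n$ by an increasing enumeration, this weaker conclusion still yields $x\in U_{n,g(n)}$ for infinitely many $n$, hence $X=\bigcup_{n\in\omega}U_{n,g(n)}$, which is all that $E^*_\omega$ requires; so the argument survives the correction, and the corollary is proved as in the paper.
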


\section{Concerning the existence of a Michael space}

A topological space $X$ is said to be \emph{productively
Lindel\"of}, if $X\times Y$ is Lindel\"of for all Lindel\"of spaces
$Y$. It was asked in \cite{Tal??} whether  productively Lindel\"of
spaces are $D$-spaces.  The positive answer to the above question
has been proved consistent and in a stream of recent papers (see the
list of references in \cite{Tal??}) several sufficient
set-theoretical conditions were established. The following statement
gives a uniform proof for some of these results. In particular, it
implies \cite[Theorems~5 and 7]{Tal10},
\cite[Corollary~4.5]{AlaAurJunTal??},
 and answers \cite[Question~15]{Tal???} in the affirmative.

 A Lindel\"of space $Y$ is called a
\emph{Michael} space, if $\omega^\omega\times Y$ is not Lindel\"of.

\begin{proposition} \label{obs1}
If there exists a Michael space, then every productively Lindel\"of
space has the Menger property.
\end{proposition}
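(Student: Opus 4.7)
The plan is to derive a contradiction from the assumption that $X$ is productively Lindel\"of and $Y$ is a Michael space while $X$ fails the Menger property. By Proposition~\ref{meng_game}, Player I has a winning strategy $\Upsilon$ in the $E^*_\omega$-game on $X$; write $\Upsilon(s)=\{U_{s,k}:k\in\omega\}$ as an increasing countable open cover of $X$ for each $s\in\omega^{<\omega}$, so that $\bigcup_n U_{z\restriction n,z(n)}\neq X$ for every $z\in\omega^\omega$. The Michael hypothesis provides an open cover $\mathcal{V}$ of $\omega^\omega\times Y$ admitting no countable subcover, which may be assumed to consist of basic rectangles $[s]\times V$ with $[s]=\{f\in\omega^\omega:f\supseteq s\}$.

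For each $s\in\omega^{<\omega}$ set $U^*_s=\bigcap_{i<|s|}U_{s\restriction i,s(i)}$, open in $X$, and form $\mathcal{W}=\{U^*_s\times V:[s]\times V\in\mathcal{V}\}$. A direct verification shows $\mathcal{W}$ covers $X\times Y$: given $(x,y)$, the inductively defined branch $f_x(n)=\min\{k:x\in U_{f_x\restriction n,k}\}$ satisfies $x\in U^*_{f_x\restriction m}$ for all $m$, so any $[s]\times V\in\mathcal{V}$ containing $(f_x,y)$ places $(x,y)$ in $U^*_s\times V\in\mathcal{W}$. Productive Lindel\"ofness of $X$ combined with Lindel\"ofness of $Y$ makes $X\times Y$ Lindel\"of, so $\mathcal{W}$ has a countable subcover $\{U^*_{s_i}\times V_i:i\in\omega\}$. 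The aim is to upgrade this to a countable subcover $\{[s_i]\times V_i:i\in\omega\}$ of $\omega^\omega\times Y$, contradicting the choice of $\mathcal{V}$.

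To carry out the upgrade, take $(g,y)\in\omega^\omega\times Y$; the winning property of $\Upsilon$ applied to the play $z=g$ produces $x_g\in X$ with $x_g\notin U_{g\restriction n,g(n)}$ for every $n$, and the subcover supplies some $i$ with $x_g\in U^*_{s_i}$ and $y\in V_i$. At any coordinate $k<|s_i|$ where $g\restriction k=s_i\restriction k$, the monotonicity of $\Upsilon(g\restriction k)$ rules out $s_i(k)\leq g(k)$: otherwise $x_g\in U_{g\restriction k,s_i(k)}\subseteq U_{g\restriction k,g(k)}$, contradicting the choice of $x_g$. The hard part will be to upgrade this to the conclusion $g\in[s_i]$, since the combinatorics so far leaves open the possibility $s_i(k)>g(k)$ at a first point of disagreement, placing $(g,y)$ only in a ``coordinatewise-bounded box'' $\{f:f(n)<s_i(n)\text{ for }n<|s_i|\}\times V_i$. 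Closing this gap will require a more careful selection of the Michael witness $\mathcal{V}$ -- for instance, refining $\mathcal{V}$ to be downward closed in the $\omega^\omega$-coordinate while preserving the absence of a countable subcover -- so that the bounded boxes arising from the above argument correspond to genuine elements of $\mathcal{V}$, yielding the contradiction.
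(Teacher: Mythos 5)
Your overall plan---transferring the non-Lindel\"ofness of $\omega^\omega\times Y$ back to $X\times Y$ via an assignment built from the failure of the Menger property---is the same idea as the paper's, but the execution has a genuine gap, which you yourself flag in the last paragraph, and the repair you sketch does not work. The single-valued assignment $x\mapsto f_x$ transfers covers only in one direction. Indeed, your own computation shows that for the index $i$ with $x_g\in U^*_{s_i}$ and $y\in V_i$ one can \emph{never} have $g\in[s_i]$ when $|s_i|\geq 1$: whenever $g\upharpoonright k=s_i\upharpoonright k$, the case $s_i(k)\leq g(k)$ is excluded, so in particular $g(k)=s_i(k)$ is excluded, and all you obtain is that $g$ lies in the clopen set $\bigcup_{k<|s_i|}\bigcup_{j<s_i(k)}[(s_i\upharpoonright k)^\frown j]$. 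These basic rectangles $[t]\times V_i$ need not belong to $\mathcal V$, so no countable subcover of $\mathcal V$ is produced. Closing $\mathcal V$ ``downward in the $\omega^\omega$-coordinate'' destroys the hypothesis you need: for $t\neq s$ of the same length the sets $[t]$ and $[s]$ are disjoint, so the added rectangles $[t]\times V$ are not contained in any original member of $\mathcal V$ and the enlarged family may acquire a countable subcover even though $\mathcal V$ had none; moreover the rectangles actually arising above are indexed by initial-segment modifications $(s_i\upharpoonright k)^\frown j$ of varying lengths, which such a closure would not even contain. So the contradiction is not reached.

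What closes the gap is to replace the single function $f_x$ by the compact set $\prod_{n\in\omega}\{0,\dots,f_x(n)\}$, that is, to work with a \emph{compact-valued} upper semicontinuous surjection $\Phi:X\Rightarrow\omega^\omega$; the existence of such a $\Phi$ for every non-Menger space is exactly \cite[Theorem~8]{Zdo05}, which the paper invokes as a black box. (The game and Proposition~\ref{meng_game} are not needed here: the direct negation of $E^*_\omega$ already yields a single sequence of increasing countable covers $u_n=\{U_{n,k}:k\in\omega\}$ with $\bigcup_{n}U_{n,z(n)}\neq X$ for all $z\in\omega^\omega$.) Given such a $\Phi$, the paper observes that $\Phi\times\mathrm{id}_Y$ is a compact-valued upper semicontinuous map from $X\times Y$ onto $\omega^\omega\times Y$ and that Lindel\"ofness passes to images under such maps; since $\omega^\omega\times Y$ is not Lindel\"of, neither is $X\times Y$. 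The compactness of the values is precisely what repairs your ``bounded box'' problem: an open set containing the compact fiber $\Phi(x)$ must contain a finite-dimensional box around it, which is what upper semicontinuity encodes and what the single-valued version lacks.
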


We refer the reader to \cite{Moo99} where the existence of a Michael
space was reformulated in a combinatorial language  and a number of
set-theoretic conditions guaranteeing the existence of Michael
spaces were established.

In the proof of Proposition~\ref{obs1} we shall use set-valued maps,
see \cite{RepSem98}. By a \emph{set-valued map} $\Phi$ from a set
$X$ into a set $Y$ we understand a map from $X$ into $\mathcal P(Y
)$ and write $\Phi : X\Rightarrow Y$  (here $\mathcal P(Y)$ denotes
the set of all subsets of $Y$). For a subset $A$ of $X$ we set
$\Phi(A) = \bigcup_{x\in A}\Phi(x) \subset Y$. A set-valued map
$\Phi$ from a topological spaces $X$ to a topological space $Y$ is
said to be
\begin{itemize}
\item  \emph{compact-valued}, if $\Phi(x)$  is compact for every $x \in X$;
\item \emph{upper semicontinuous}, if for every open subset $V$ of $Y$ the
set $\Phi^{-1}_\subset (V) = \{x \in  X : \Phi(x) \subset V\}$ is
open in $X$.
\end{itemize}

The proof of the following claim is straightforward.

\begin{claim} \label{trivial}
\begin{enumerate}
 \item Suppose that $X,Y$ are topological spaces, $X$  is  Lindel\"of,
 and  $\Phi:X\Rightarrow Y$
is a compact-valued upper semicontinuous map such that $Y=\Phi(X)$.
Then $Y$ is Lindel\"of.
\item If $\Phi_0:X_0\Rightarrow Y_0$ and $\Phi_0:X_1\Rightarrow Y_1$
are compact-valued upper semicontinuous, then so is the map
$\Phi_0\times\Phi_1 :X_0\times X_1\Rightarrow Y_0\times Y_1$
assigning to each $(x_0,x_1)\in X_0\times X_1$ the product
$\Phi_0(x_0)\times\Phi_1(x_1)$.
\end{enumerate}
\end{claim}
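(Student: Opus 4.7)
For part (1), the plan is the standard compact-valued upper semicontinuous preimage argument. Given an open cover $\mathcal V$ of $Y$, I would fix $x\in X$, use compactness of $\Phi(x)$ to select a finite subfamily $\mathcal V_x\subset \mathcal V$ with $\Phi(x)\subset\bigcup \mathcal V_x=:W_x$, and then set $U_x=\Phi^{-1}_\subset(W_x)$. Upper semicontinuity makes $U_x$ an open neighbourhood of $x$, so $\{U_x:x\in X\}$ is an open cover of the Lindel\"of space $X$, from which I extract a countable subcover $\{U_{x_n}:n\in\omega\}$. The corresponding countable union $\bigcup_{n\in\omega}\mathcal V_{x_n}\subset\mathcal V$ then covers $Y=\Phi(X)$: any $y\in Y$ equals $\Phi(x)$-image of some $x$, which lies in some $U_{x_n}$, forcing $y\in\Phi(x)\subset W_{x_n}$ and hence into one of the finitely many elements of $\mathcal V_{x_n}$.

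For part (2), fix an open $W\subset Y_0\times Y_1$ and a point $(x_0,x_1)$ with $\Phi_0(x_0)\times\Phi_1(x_1)\subset W$; I need an open box around $(x_0,x_1)$ inside $(\Phi_0\times\Phi_1)^{-1}_\subset(W)$. The key auxiliary step is a tube-lemma construction: for each $y_0\in\Phi_0(x_0)$, the compact slice $\{y_0\}\times\Phi_1(x_1)\subset W$ admits open sets $V^0_{y_0}\ni y_0$ and $V^1_{y_0}\supset\Phi_1(x_1)$ with $V^0_{y_0}\times V^1_{y_0}\subset W$ (obtained by covering the slice by finitely many basic open rectangles inside $W$ and taking the intersection of their second coordinates). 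Then by compactness of $\Phi_0(x_0)$, finitely many $V^0_{y_0^1},\dots,V^0_{y_0^n}$ cover $\Phi_0(x_0)$; set $V^0=\bigcup_j V^0_{y_0^j}$ and $V^1=\bigcap_j V^1_{y_0^j}$. A direct check gives $V^0\times V^1\subset W$, $\Phi_0(x_0)\subset V^0$, and $\Phi_1(x_1)\subset V^1$. Applying upper semicontinuity of the factors, $(\Phi_0)^{-1}_\subset(V^0)\times(\Phi_1)^{-1}_\subset(V^1)$ is an open neighbourhood of $(x_0,x_1)$ entirely contained in $(\Phi_0\times\Phi_1)^{-1}_\subset(W)$, as required. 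Compact-valuedness of the product map is immediate from Tychonoff (for two compact factors).

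The only mild obstacle is the tube-lemma step in (2); everything else is bookkeeping, which is consistent with the authors labelling this claim as trivial.
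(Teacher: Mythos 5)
The paper gives no proof of this claim at all---it simply declares it straightforward---so there is no authorial argument to diverge from; your proof is the standard one and is correct in both parts (Lindel\"ofness of the image via finite subfamilies $\mathcal V_x$ pulled back through $\Phi^{-1}_\subset$, and upper semicontinuity of the product via the generalized tube lemma, with compact-valuedness from Tychonoff). The only blemish is the parenthetical describing the tube-lemma step: after covering the compact slice $\{y_0\}\times\Phi_1(x_1)$ by finitely many basic rectangles $A_i\times B_i\subset W$ with $y_0\in A_i$, one should take $V^0_{y_0}=\bigcap_i A_i$ (the intersection of the \emph{first} coordinates) and $V^1_{y_0}=\bigcup_i B_i$ (the \emph{union} of the second coordinates), not ``the intersection of their second coordinates''; since the properties you then assert for $V^0_{y_0}$ and $V^1_{y_0}$ are exactly the correct ones, the remainder of the argument goes through unchanged.
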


\noindent\textit{Proof of Proposition~\ref{obs1}.} Suppose, contrary
to our claim, that $X$ is a productively Lindel\"of
 space which does not have the
 Menger property and $Y$ is a Michael space.
 It suffices to show that $X\times Y$ is not Lindel\"of.

 Indeed, by \cite[Theorem~8]{Zdo05}
 there exists a compact-valued upper semicontinuous map
 $\Phi:X\to \omega^\omega$ such that $\Phi(X)=\omega^\omega$.
By Claim~\ref{trivial}(2) the product $\omega^\omega\times Y$ is the
image of $X \times Y$ under a compact-valued upper semicontinuous
map. By the definition of a Michael space,
 $\omega^\omega\times Y$ is not Lindel\"of. By applying Claim~\ref{trivial}(1) we can
conclude that $X\times Y$ is not Lindel\"of neither. \hfill $\Box$
\medskip

By a result of Tall \cite{Tal10} the existence of a Michael space
implies that all productively Lindel\"of analytic metrizable spaces
are $\sigma$-compact. Combining recent results obtained in
\cite{AlaAurJunTal??} and \cite{Rep??}
  we can consistently extend this result
to all $\Sigma^1_2$ definable subsets of $2^\omega$.

\begin{theorem} \label{sigma_1_2}
Suppose that $\mathrm{cov}(\mathcal M)>\omega_1$ and there exists a
Michael space. Then every productively Lindel\"of $\Sigma^1_2$
definable subset of $2^\omega$ is $\sigma$-compact.
\end{theorem}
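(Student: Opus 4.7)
My plan is to feed $X$ into Proposition~\ref{obs1} and then chain together the two cited results. First, since a Michael space exists and $X$ is productively Lindel\"of, Proposition~\ref{obs1} immediately yields that $X$ has the Menger property; from this point on I may treat $X$ as a Menger, productively Lindel\"of, $\Sigma^1_2$ subset of $2^\omega$.

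Next, I would invoke the result of \cite{Rep??} asserting that, under the hypothesis $\mathrm{cov}(\mathcal M)>\omega_1$, every Menger $\Sigma^1_2$ subset of $2^\omega$ has the Hurewicz property. This is the step that consumes the cardinal assumption: it lifts the Hurewicz-type dichotomy from the analytic level (where it is a ZFC theorem) one step up in the projective hierarchy, presumably by an absoluteness/forcing argument that exploits $\mathrm{cov}(\mathcal M)>\omega_1$ to witness the relevant definable sets in a suitable extension.

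Finally, I would apply the result of \cite{AlaAurJunTal??} (of which Tall's theorem from \cite{Tal10} is the analytic special case): assuming that a Michael space exists, every productively Lindel\"of Hurewicz metrizable space is $\sigma$-compact. Applied to $X$, which by the previous two steps is productively Lindel\"of and Hurewicz, this delivers the conclusion.

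The main obstacle I expect is the middle step, where we need to push Hurewicz's theorem from analytic to $\Sigma^1_2$ sets. Neither the Menger property alone, nor the Michael-space hypothesis alone, is known to do this; the cardinal assumption $\mathrm{cov}(\mathcal M)>\omega_1$ is genuinely needed so that a hypothetical Menger non-$\sigma$-compact $\Sigma^1_2$ example can be reflected to the analytic level, where it would contradict the classical result. The remaining two steps are straightforward citations of Proposition~\ref{obs1} and of \cite{AlaAurJunTal??}.
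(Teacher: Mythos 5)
Your opening step (Proposition~\ref{obs1} gives that $X$ is Menger) is correct but turns out to be unnecessary, and your middle step is a genuine gap. The assertion that under $\mathrm{cov}(\mathcal M)>\omega_1$ every Menger $\Sigma^1_2$ subset of $2^\omega$ is Hurewicz is not something you can cite: it is not what \cite{Rep??} proves --- the ``Hurewicz theorem'' in that title is the dichotomy between being covered by few compact sets and containing a closed copy of $\omega^\omega$, not the Hurewicz covering property --- and you yourself only offer a ``presumably'' by way of justification. An argument whose crucial step is an unproved upgrade from Menger to Hurewicz is not a proof. Your final step is also doubtful: the statement you attribute to \cite{AlaAurJunTal??} (Michael space $+$ productively Lindel\"of $+$ Hurewicz metrizable $\Rightarrow$ $\sigma$-compact) is not the one the paper uses, and it is unclear how a Michael space could be brought to bear on a Hurewicz non-$\sigma$-compact set at all, since such a set neither contains a closed copy of $\omega^\omega$ nor admits a compact-valued upper semicontinuous map onto a dominating subset of $\omega^\omega$ --- the only two known mechanisms for refuting productive Lindel\"ofness from a Michael space.

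The paper's actual proof bypasses the Menger and Hurewicz covering properties entirely. It invokes the $\Sigma^1_2$ Hurewicz dichotomy of \cite{Rep??}: either $X$ is a union of $\omega_1$-many of its compact subspaces, or $X$ contains a closed copy of $\omega^\omega$. The second alternative is killed directly by the Michael space $Y$: the product $\omega^\omega\times Y$ would sit as a closed non-Lindel\"of subspace of $X\times Y$, contradicting productive Lindel\"ofness. In the first alternative, \cite[Corollary~4.15]{AlaAurJunTal??} states that a productively Lindel\"of space which is a union of $\omega_1$-many compact subspaces is $\sigma$-compact; this and the dichotomy are where $\mathrm{cov}(\mathcal M)>\omega_1$ is consumed. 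To salvage your route you would have to actually prove your middle step; as it stands, that is the missing idea.
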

\begin{proof}
Let $X$ be a productively Lindel\"of $\Sigma^1_2$ definable subset
of $2^\omega$.

If $X$ cannot be written as a union of $\omega_1$-many of its
compact subspaces, then it contains a closed copy of $\omega^\omega$
\cite{Rep??}, and hence the existence of the Michael space implies
that $X$ is not productively Lindel\"of, a contradiction.

Thus $X$ can be written as a union of $\omega_1$-many of its compact
subspaces, and therefore it is $\sigma$-compact by
\cite[Corollary~4.15]{AlaAurJunTal??}.
\end{proof}

We do not know whether the assumption $\mathrm{cov}(\mathcal
M)>\omega_1$ can be dropped from Theorem~\ref{sigma_1_2}.

\begin{question}
Suppose that there exists a Michael space. Is every coanalytic
productively Lindel\"of space $\sigma$-compact?
\end{question}

By \cite[Proposition~31]{Tal????} the affirmative answer to the
question above follows from the Axiom of Projective Determinacy.

\section{Locally finite unions}

\begin{theorem} \label{simp_obs}
Suppose that $X$ is a locally $D$-space which admits a
$\sigma$-locally finite cover by Lindel\"of spaces. Then $X$ is a
$D$-space.
\end{theorem}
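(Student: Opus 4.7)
The plan is to imitate the layered strategy of the Borges--Wehrly-type theorem that a $\sigma$-locally finite closed cover by $D$-subspaces yields a $D$-space, using local $D$-ness to replace each (not necessarily closed) Lindel\"of member of the given cover by a countable family of closed $D$-subspaces of $X$.

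Concretely, I would fix a neighbourhood assignment $N:X\to\tau$ and, using regularity together with the local $D$ assumption, first refine it so that $\overline{N(x)}$ is contained in a closed $D$-subspace of $X$ for every $x\in X$. Writing the cover as $\mathcal L=\bigcup_{n\in\omega}\mathcal L_n$ with each $\mathcal L_n$ locally finite, the main step is to construct closed discrete sets $A_n\subseteq X$ by induction on $n$, subject to $A_n\cap N(\bigcup_{k<n}A_k)=\emptyset$ and $\bigcup\mathcal L_n\subseteq N(\bigcup_{k\leq n}A_k)$. Setting $Y_n:=X\setminus N(\bigcup_{k<n}A_k)$, which is closed in $X$, the trace $L\cap Y_n$ of any $L\in\mathcal L_n$ is Lindel\"of, so it admits a countable cover by closed $D$-subspaces of $X$ of the form $\overline{N(x)}$. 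Iterating the $D$-property along this countable list (as in the standard proof that a countable closed union of $D$-subspaces is $D$) produces a closed discrete set $A_{n,L}\subseteq X$ with $L\cap Y_n\subseteq N(A_{n,L})$. Put $A_n:=\bigcup_{L\in\mathcal L_n}A_{n,L}$ and $A:=\bigcup_n A_n$. The equality $N(A)=X$ is immediate from $X=\bigcup_n\bigcup\mathcal L_n$, and the familiar ``stacking'' argument afforded by the condition $A_n\cap N(\bigcup_{k<n}A_k)=\emptyset$ reduces closed discreteness of $A$ in $X$ to closed discreteness of each $A_n$ in $X$.

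The main obstacle will be precisely this last reduction: ensuring that the individual pieces $A_{n,L}$ combine into a set $A_n$ which is closed and discrete in $X$, and not merely in some $F_\sigma$-subset of $X$ containing $\bigcup\mathcal L_n\cap Y_n$. This is where local finiteness, as opposed to mere countability, of $\mathcal L_n$ enters in an essential way: one must choose the closed $D$-neighbourhoods $\overline{N(x)}$ from which the $A_{n,L}$ are built to lie inside open sets meeting only finitely many members of $\mathcal L_n$, which is possible by the local finiteness of $\mathcal L_n$. With such a choice the family $\{A_{n,L}:L\in\mathcal L_n\}$ becomes locally finite in $X$, and $A_n$ inherits closed discreteness in $X$, completing the argument.
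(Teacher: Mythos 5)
Your outer induction on $n$ and the final stacking argument are fine, but there is a genuine gap in the inner step, namely the claim that ``iterating the $D$-property along this countable list \dots\ produces a closed discrete set $A_{n,L}\subseteq X$ with $L\cap Y_n\subseteq N(A_{n,L})$.'' The standard iteration over a countable list of closed $D$-subspaces $Z_0,Z_1,\dots$ (choose $B_m$ closed discrete in the closed set $Z_m\setminus N(B_0\cup\dots\cup B_{m-1})$ with $Z_m\setminus N(B_{<m})\subseteq N(B_m)$) yields a set $\bigcup_m B_m$ that is closed and discrete in $X$ only because every point of the space eventually lies in some $N(B_{\leq m})$ --- that is, only when $\bigcup_m Z_m$ is all of $X$ (as in Borges--Wehrly), or at least closed in $X$. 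In your construction $\bigcup_m Z_m=\bigcup_m\overline{N(x_m)}$ is merely an $F_\sigma$ set containing $L\cap Y_n$; a point $p$ in its closure but outside $N(A_{n,L})$ has no neighbourhood provided by the argument that misses all but finitely many $B_m$, and the $B_m$ can genuinely accumulate at such a $p$ (think of a Lindel\"of non-closed $L$ covered by small closed neighbourhoods marching toward a boundary point, with a neighbourhood assignment of shrinking diameters). You correctly sense the danger of getting closed discreteness ``merely in some $F_\sigma$-subset,'' but you locate it in the combination across $L\in\mathcal L_n$ and propose local finiteness of $\mathcal L_n$ as the cure; that fix addresses only the across-$L$ problem and does nothing for the accumulation \emph{within} a single $A_{n,L}$, where the covering family $\{\overline{N(x_m)}:m\in\omega\}$ is countable but not locally finite.

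The paper's proof reorganizes exactly to avoid this. It first produces from local $D$-ness and Lindel\"ofness a $\sigma$-locally finite cover of $X$ by \emph{closed} $D$-subspaces, taking the pieces in the form $\overline{F\cap U_x}\subseteq\overline{F}$ so that local finiteness is inherited from $\{\overline F:F\in\mathcal F_n\}$ (note that your pieces $\overline{N(x)}$ need not sit inside $\overline L$, so even your across-$L$ local finiteness claim needs this adjustment). It then uses that a union of a locally finite family of closed $D$-subspaces is again a closed $D$-subspace to write $X$ as an \emph{increasing countable union} of closed $D$-subspaces, and only at that point runs the countable iteration (citing Borges--Wehrly), where the union of the terms is all of $X$ and hence every point is eventually captured by some $N(B_{\leq m})$. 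If you want to keep your hands-on construction, the repair is the same: postpone the countable iteration until you have a single increasing sequence of closed $D$-sets covering $X$, rather than performing it separately inside each Lindel\"of piece.
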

\begin{proof}
Let $\mathcal F=\bigcup_{n\in\omega}\mathcal F_n$ be a cover of $X$
by Lindel\"of subspaces such that $\mathcal F_n$ is locally finite.
Fix $F\in\mathcal F_n$. For every $x\in F$ there exists an open
neighbourhood $U_x$ of $x$ such that $\bar{U_x}$ is a $D$-space. Let
$C_F$ be a countable subset of $F$ such that $F\subset\bigcup_{x\in
C_F}U_x$. Then $\mathcal Z_F=\{\overline{F\cap U_x}:x\in C_F\}$ is a
countable cover of $F$ consisting of closed $D$-subspaces of $X$
such that $F\cap Z$ is dense in $Z$ for all $Z\in\mathcal Z_F$. It
follows from the above that $X$ admits a $\sigma$-locally finite
cover consisting of closed $D$-subspaces. Since a union of a locally
finite family of closed $D$-subspaces is easily seen to be a closed
$D$-subspace, $X$ is a union of an
 increasing sequence of its closed $D$-subspaces. Therefore it is a $D$-space
 by results of \cite{BorWeh91}.
\end{proof}

\begin{corollary}\label{cor3_1}
 If a topological space $X$  admits a $\sigma$-locally finite locally countable
 cover by topological spaces with the Menger property,
then it is a $D$-space.

In particular, a locally Lindel\"of space admitting  a
$\sigma$-locally finite cover by topological spaces with the Menger
property is a $D$-space.
\end{corollary}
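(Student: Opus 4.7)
The plan is to reduce the claim to Theorem~\ref{simp_obs}. Since every Menger space is Lindel\"of, the given cover is automatically a $\sigma$-locally finite cover of $X$ by Lindel\"of subspaces. Thus it suffices to show that $X$ is a locally $D$-space, i.e., that every point has an open neighbourhood whose closure is a $D$-space.

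Fix $x\in X$. Local countability of the cover $\mathcal{F}$ supplies an open neighbourhood $V_x$ of $x$ meeting only countably many elements $\{F_n:n\in\omega\}$ of $\mathcal{F}$. By regularity choose open $U_x$ with $x\in U_x\subset\overline{U_x}\subset V_x$. Then $\overline{U_x}\subset\bigcup_{n\in\omega}F_n$, because any $y\in\overline{U_x}$ lies in some $F\in\mathcal{F}$, and this $F$ must meet $V_x\supset\overline{U_x}$ and so belong to $\{F_n:n\in\omega\}$.

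The crucial auxiliary observation is that a countable union of Menger subspaces of $X$ is again Menger. Given a sequence $\langle u_m:m\in\omega\rangle$ of open covers of $\bigcup_n F_n$, one partitions $\omega=\bigsqcup_{n\in\omega}A_n$ into infinitely many infinite sets and applies the Menger property to each $F_n$ against the subsequence $\langle u_m:m\in A_n\rangle$ (restricted to $F_n$), producing finite $v_m\in[u_m]^{<\omega}$ for every $m\in\omega$ with $\bigcup_{m\in\omega}\cup v_m\supset\bigcup_n F_n$. Consequently $\bigcup_{n\in\omega}F_n$ is Menger; since $\overline{U_x}$ is closed in $X$ it is also closed in $\bigcup_n F_n$, hence Menger, hence a $D$-space by Aurichi's theorem. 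Thus $X$ is locally $D$, and Theorem~\ref{simp_obs} finishes the proof.

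For the \emph{in particular} assertion it suffices to observe that the locally Lindel\"of hypothesis together with $\sigma$-local finiteness of the cover already implies local countability: if $\overline{V_x}$ is Lindel\"of, cover it by countably many open sets each meeting only finitely many members of $\mathcal{F}_n$, and union over $n$. No step looks technically delicate here; the only mild subtlety is verifying that the countable union of Menger \emph{subspaces} of the ambient space is Menger, which is handled by the standard diagonal partitioning of $\omega$ described above.
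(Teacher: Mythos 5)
Your proof is correct and follows essentially the same route as the paper's: reduce to Theorem~\ref{simp_obs} by using local countability (derived, for the second assertion, from local Lindel\"ofness) to show each point has a closed neighbourhood contained in a countable union of Menger subspaces, hence Menger, hence a $D$-space by Aurichi's theorem. You merely spell out the details (the diagonal partition of $\omega$ showing countable unions of Menger subspaces are Menger, and that Menger implies Lindel\"of so the cover qualifies for Theorem~\ref{simp_obs}) that the paper leaves implicit.
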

\begin{proof}
The second part is a direct consequence of the first one since every
$\sigma$-locally countable family of subspaces of a locally
Lindel\"of space is locally countable.

To prove the first assertion, note that by local countability  every
point $x\in X$ has a closed neighbourhood which is a countable union
of its  subspaces  with the Menger property, and hence it has  the
Menger property itself. Therefore $X$ is a locally $D$-space. It now
suffices to apply Theorem~\ref{simp_obs}.
\end{proof}

It is known that every Lindel\"of $\mathcal C$-scattered space  is
$\mathcal C$-like, and that $\mathcal C$-like spaces have the Menger
property, see \cite[p.~247]{Tel87} and references therein. Thus
Corollary~\ref{cor3_1} implies Theorems~2.2 and 3.1 from
\cite{MarSou09}.



\bibliographystyle{amsplain}

\begin{thebibliography}{10}

\bibitem{AlaAurJunTal??}
Alas, O.; Aurichi, L.F.; Junqueira, L.R.;  Tall, F.D., {\it
Non-productively Lindel\"of spaces and small cardinals,} preprint,
2010.



\bibitem{Aur10} Aurichi, L.F.,
{\it $D$-spaces, topological games, and selection principles,}
  Topology Proc.  \textbf{36}  (2010), 107--122.

\bibitem{BorWeh91} Borges, C.R.; Wehrly, A.C.,
 {\it A study of $D$-spaces,}
 Topology Proc.  \textbf{16}  (1991), 7--15.

\bibitem{BukHal03} Bukovsk\'y, L.; Hale\v{s}, J.,
{\it On Hurewicz properties,}  Topology Appl.  \textbf{132}  (2003),
71--79.

\bibitem{Bur84} Burke, D.K., {\em Covering properties}, in:
 {\em Handbook of Set-Theoretic
Topology} (K.~Kunen, J.E.~Vaughan, eds.), North Holland, Amsterdam,
1984, 347--422.

\bibitem{vDaPfe79} Van Douwen, E.K.; Pfeffer, W.F.,
 {\it Some properties of the
Sorgenfrey line and related spaces,}  Pacific J. Math.  \textbf{81}
(1979),  371--377.

\bibitem{Gru??} Gruenhage, G.,
{\it A survey on $D$ spaces}, Contemp. Math., to appear.

\bibitem{Hur25} Hurewicz, W.,
{\it \"Uber die Verallgemeinerung des Borellschen Theorems}, Math.
Z. \textbf{24} (1925), 401--421.

\bibitem{COC2}  Just, W.; Miller, A.W.; Scheepers, M.; Szeptycki, P.J.,
{\it The combinatorics of open covers. II,}  Topology Appl.
\textbf{73}  (1996),   241--266.

\bibitem{MarSou09}
Mart\'inez, J.C.; Soukup, L., {\it The D-property in unions of
scattered spaces,}  Topology Appl.  \textbf{156}  (2009),
3086--3090.

\bibitem{Moo99} Moore, J.T.,
{\it Some of the combinatorics related to Michael's problem,}
 Proc. Amer. Math. Soc.  \textbf{127}  (1999),   2459--2467.

\bibitem{Rep??} Repick\'y, M.,
{\it Another proof of Hurewicz theorem,} Tatra Mt. Math. Publ., to
appear.

\bibitem{RepSem98} Repov\v{s}, D.; Semenov, P.,
 {\it Continuous selections of multivalued mappings.}
Mathematics and its Applications, 455. Kluwer Academic Publishers,
Dordrecht, 1998.

\bibitem{COC1} Scheepers, M.,
{\it Combinatorics of open covers. I.  Ramsey theory,}  Topology
Appl. \textbf{69}  (1996),   31--62.

\bibitem{ShiZha??} Shi, W.;  Zhang, H.,
{\it A note on $D$-spaces}, preprint, 2010.

\bibitem{Tal10} Tall, F.D.,
{\it Productively Lindel\"of spaces may all be $D$,} preprint, 2010.

\bibitem{Tal???} Tall, F.D.,
{\it Lindel\"of spaces which are Menger, Hurewicz, Alster,
productive, or $D$,} preprint, 2010.

\bibitem{Tal????} Tall, F.D.,
{\it A note on productively Lindel\"of spaces,} preprint, 2010.


\bibitem{Tal??} Tall, F.D.,
{\it Set-theoretic problems concerning Lindel\"of spaces}, preprint,
2010.


\bibitem{Tel87}
Telg\'arsky, R., {\it Topological games: on the 50th anniversary of
the Banach-Mazur game,}
 Rocky Mountain J. Math.  \textbf{17}  (1987),   227--276.

\bibitem{Tsa07}  Tsaban, B., {\it
Selection principles and special sets of reals,} in: {\it  Open
problems in topology II} (edited By
 Elliott Pearl),  Elsevier Sci. Publ., 2007, pp. 91--108.

\bibitem{Vau90}  Vaughan J.,
  {\it Small uncountable cardinals and topology},
  in: {\it  Open problems in topology} (J. van Mill, G.M. Reed,
Eds.),  Elsevier Sci. Publ., 1990, pp. 195-218.

\bibitem{Zdo05} Zdomskyy, L,  {\it A semifilter approach to selection principles,}
Comment. Math.  Univ.  Carolin. \textbf{46} (2005), 525--539.

\end{thebibliography}

\end{document}